\def\rset{\mathbb R}
\def\zset{\mathbb Z}
\def\eqsp{\;}
 \newcommand{\pscal}[2]{\left\langle#1,#2\right\rangle}
\newcommand{\eqdef}{\ensuremath{\stackrel{\mathrm{def}}{=}}}
\def\Yset{\mathsf{Y}} 
\def\e{\mathcal{E}}
\def\N{\mathcal{N}}
\def\M{\mathcal{M}}
\def\A{\mathcal{A}}
\newcommandx\sequence[3][2=t,3=\zset]
\def\PP{\mathbb{P}} 
\newcommand{\CPP}[3][]
{\ifthenelse{\equal{#1}{}}{{\mathbb P}\left(\left. #2 \, \right| #3 \right)}{{\mathbb P}_{#1}\left(\left. #2 \, \right | #3 \right)}}
\def\PE{\mathbb{E}} 
\newcommand{\CPE}[3][]
{\ifthenelse{\equal{#1}{}}{{\mathbb E}\left[\left. #2 \, \right| #3 \right]}{{\mathbb E}_{#1}\left[\left. #2 \, \right | #3 \right]}}
\def\K{\vartheta}
\def\L{\mathcal{L}} 
\newcommand{\nnorm}[1]{\left\vert\!\left\vert\!\left\vert#1\right\vert\!\right\vert\!\right\vert}
\newcommand{\utilde}[1]{\underaccent{\tilde}{#1}}
\def\r{\textsf{r}}
\theoremstyle{plain}
\newtheorem{theorem}{Theorem}
\newtheorem{assumption}{H\hspace{-3pt}}
\newaliascnt{proposition}{theorem}
\newaliascnt{lemma}{theorem}
\newtheorem{lemma}[lemma]{Lemma}
\newaliascnt{corollary}{theorem}
\theoremstyle{definition}
\newaliascnt{definition}{theorem}
\newaliascnt{remark}{theorem}
\newtheorem{remark}[remark]{Remark}
\newaliascnt{example}{theorem}
\newtheorem{example}[example]{Example}
\def\rmd{\mathrm{d}}
\def\1{\mathbbm{1}}
\begin{document}

\title[A scalable quasi-Bayesian framework for Gaussian graphical models]{A scalable quasi-Bayesian framework for Gaussian graphical models}

\author{Yves F. Atchad\'e}  \thanks{ Y. F. Atchad\'e: University of Michigan, 1085 South University, Ann Arbor,
  48109, MI, United States. {\em E-mail address:} yvesa@umich.edu}

\subjclass[2000]{60F15, 60G42}

\keywords{Gaussian graphical models, Quasi-Bayesian inference, Pseudo-likelihood, Posterior contraction, Moreau-Yosida approximation, Markov Chain Monte Carlo}

\maketitle

\begin{center} (Dec. 2015) \end{center}

\begin{abstract}
This paper deals with the Bayesian estimation of high dimensional Gaussian graphical models. We develop a quasi-Bayesian implementation of the neighborhood selection method of \cite{meinshausen06} for the estimation of Gaussian graphical models. The method produces a product-form quasi-posterior distribution that can be efficiently explored by parallel computing. We derive a non-asymptotic bound on the  contraction rate of the quasi-posterior distribution. The result shows that the proposed quasi-posterior distribution contracts towards the true precision matrix at a rate given by the worst contraction rate of the linear regressions that are involved in the neighborhood selection. We develop a Markov Chain Monte Carlo algorithm for approximate computations, following an approach from \cite{atchade:15a}. We illustrate the methodology with a simulation study. The results show that the proposed method can fit Gaussian graphical models at a scale unmatched by other Bayesian methods for graphical models.
\end{abstract}

\setcounter{secnumdepth}{3}

\section{Introduction}\label{sec:intro}
We consider the problem of fitting large Gaussian graphical models with diverging number of parameters from limited data. This amount to estimating a sparse precision matrix $\vartheta\in \M_p^+$ from $p$-dimensional Gaussian observations $y^{(i)}\in\rset^p$, $i=1,\ldots,n$, where $\M_p^+$ denotes the cone of $\rset^{p\times p}$ of symmetric positive definite matrices. The frequentist approach to this problem has generated an impressive literature over the last decade or so (see for instance \cite{buhlGeer11,hastie:etal:15} and the reference therein). 

There is currently an interest, particularly in biomedical research, for statistical methodologies that can allow practitioners to incorporate external information in fitting such graphical models (\cite{mukherjee:08,peterson:etal:14}). 
This problem naturally calls for a Bayesian formulation (\cite{dobra:11a,dobra:11b,wang:li:12,khondkeretal13,peterson:etal:14}). However, most existing Bayesian methods for fitting graphical models do not scale well with $p$, the number of nodes in the graph. The main difficulty is computational, and hinges on the ability to handle interesting prior distributions on $\M_p^+$. The most commonly used class of priors distributions for Gaussian graphical models is the class of G-Wishart distributions (\cite{atay:massam:05}). However G-Wishart distributions have intractable normalizing constants, and become impractical for inferring large graphical models, due to the cost of approximating the normalizing constants (\cite{dobra:11a,dobra:11b,wang:li:12}).  Following the development of the Bayesian lasso of \cite{casella:park08} and other Bayesian shrinkage priors for linear regressions (\cite{carvalhoetal10}), several authors have proposed prior distributions on $\M_p^+$ obtained by putting conditionally independent shrinkage priors on the entries of the matrix, subject to a positive definiteness constraint (\cite{wang12,khondkeretal13}). The main drawback of this approach is that these prior distributions are constructed explicitly so as to cancel the intractable normalizing constants, raising the issue of the impact of such prior distribution trick on the inference. Furthermore, dealing with the positive definiteness constraint in the posterior distribution requires careful MCMC design, and becomes a limiting factor for large $p$. So it appears that most existing Bayesian methods for high-dimensional graphical models do not scale well, and can fit only small to moderately large models (upto $p=200$).


Building on some recent works \cite{atchade:15a,atchade:15b}, we develop a quasi-Bayesian approach for fitting large Gaussian graphical models. Our general approach to the problem consists in working with a ``larger" pseudo-model $\{\check f_\theta,\;\theta\in\check\Theta\}$, where $\M_p^+\subseteq\check\Theta$. By pseudo-model we mean that the function $z\mapsto \check f_\theta(z)$ is typically not a density on $\rset^p$, but $\check f_\theta$ is chosen such that the function $\theta\mapsto \sum_{i=1}^n \log \check f_\theta(y^{(i)})$ is a good candidate for M-estimation of $\vartheta$. The enlargement of the model space from $\M_p^+$ to $\check\Theta$ allows us to relax the positive definiteness constraint. With a prior distribution $\Pi$ on $\check\Theta$, we obtain a quasi-posterior distribution $\check\Pi_{n,p}$ (not a proper posterior distribution), since $\check f_\theta$ is not a proper likelihood function.  In the specific case of Gaussian graphical models, we propose to take $\check\Theta$ as the space of matrices with positive diagonals, and to take $z\mapsto \check f_\theta(z)$ as the pseudo-model underpinning the neighborhood selection method of \cite{meinshausen06}. This choice  gives a quasi-posterior distribution $\check\Pi_{n,p}$ that factorizes, and leads to a drastic improvement in the computing time needed for MCMC computation when a parallel computing architecture is used. We illustrate the method in Section \ref{sec:example} using simulated data where the number of nodes in the graph is $p\in\{100,500,1000\}$.

The idea of replacing the likelihood function by a pseudo-likelihood (or quasi-likelihood) function in a Bayesian inference is not new and has been developed in other contexts, such as in Bayesian semi-parametric inference (\cite{kato:13,li:jiang:14}, and the references therein), and in approximate Bayesian computation (\cite{fearnheadetal10}). A general analysis of the contraction properties of these distributions for high-dimensional problems can be found in \cite{atchade:15b}.

We study the contraction properties of the quasi-posterior distribution $\check\Pi_{n,p}$ as $n,p\to\infty$. Under the assumption that there exists a true precision matrix, and some additional assumption on the prior distribution, we show that when the true precision matrix is well conditioned, then $\check\Pi_{n,p}$ contracts\footnote{The contraction rate is measured in the $L_{\infty,2}$ matrix norm, defined as the largest $L_2$ column norms} at the rate $\sqrt{\frac{\bar s \log(p)}{n}}$ (see Theorem \ref{thm2} for a precise statement), where $\bar s$ can be viewed as an upper-bound on the largest degree in the un-directed graph defined by the true precision matrix. This convergence rate corresponds to the worst convergence rate that we get from the Bayesian analysis of the linear regressions involved in the neighborhood selection. The condition on the sample size $n$ for the results mentioned above to hold is  $n = O\left(\bar s\log(p)\right)$, which shows that the quasi-posterior distribution can concentrate around the true value, even in cases where $p$ exceeds $n$. 

The rest of the paper is organized as follows. Section \ref{sec:qbgm} provides a general discussion of quasi-models and quasi-Bayesian inference. The section ends with the introduction of the proposed quasi-Bayesian distribution, based on the neighborhood selection method of \cite{meinshausen06}. We specialized the discussion to Gaussian graphical models in Section \ref{sec:ggm}. The theoretical analysis focuses on the Gaussian case, and is presented in Section \ref{sec:ggm}, but the proofs are postponed to Section \ref{sec:proof}. The simulation study is presented in Section \ref{sec:example}. We end the paper with some concluding thoughts in Section \ref{sec:conclusion}. A \texttt{MATLAB} implementation of the method is available from the author's website.

\section{Quasi-Bayesian inference of graphical models}\label{sec:qbgm}
For integer $p\geq 1$, and $i=1,\ldots,p$, let $\Yset_i$ be a nonempty subset of $\rset$, and set $\Yset\eqdef \Yset_1\times\cdots\times\Yset_p$, that we assume is equipped with a reference sigma-finite product measure $\rmd y$. We first consider a class of Markov random field distributions $\{f_\omega,\;\omega\in\Omega\}$ for joint modeling of $\Yset$-valued random variables. We discuss several quasi-Bayesian methods for fitting such models, including the proposed method based on the neighborhood selection method of \cite{meinshausen06}. Gaussian graphical models are then discussed in more detail as special case in Section \ref{sec:ggm}.

let $\M_p$ denote the set of all real symmetric $p\times p$ matrices equipped with the inner product $\pscal{A}{B}_{\textsf{F}}\eqdef \sum_{i\leq j}A_{i}B_{ij}$, and norm $\|A\|_{\textsf{F}}\eqdef \sqrt{\pscal{A}{A}_{\textsf{F}}}$. As above, $\M_p^+$ denotes  the subset of $\M_p$ of positive definite matrices. For $i=1,\ldots,p$, and $1\leq j<k\leq p$, let $B_i:\;\Yset_i\to \rset$ and $B_{jk}:\;\Yset_j\times\Yset_k\to\rset$ be non-zero measurable functions that we assume known. From these functions we define a $\M_p$-valued function $\bar B:\;\Yset\to \M_p$ by  
\[(\bar B(y))_{ij}=\left\{\begin{array}{ll} B_i(y_i) & \mbox{ if } i=j,\\
B_{ij}(y_i,y_j) & \mbox{ if } i < j, \\
 B_{ji}(y_j,y_i), & \mbox{ if }j < i.\end{array}\right.\]
  These functions define the parameter space
\[\Omega\eqdef\left\{\omega\in\M_p:\;Z(\omega)\eqdef \int_{\Yset} e^{-\pscal{\omega}{\bar B(y)}_{\textsf{F}}}\rmd y<\infty\right\}.\] 
We assume that $\Yset$ and $\bar B$ are such that $\Omega$ is non-empty, and we consider the exponential family $\{f_\omega,\;\omega\in\Omega\}$ of densities $f_\omega$ on $\Yset$   given by
\begin{equation}\label{eq:f}
f_\omega(y)=\exp\left(-\pscal{\omega}{\bar B(y)}_{\textsf{F}}-\log Z(\omega)\right),\;\;y\in\Yset.\end{equation}

The model $\{f_\omega,\;\omega\in\Omega\}$ can be useful to capture the dependence structure between a set of $p$ random variables taking values in $\Yset$. The version posited in (\ref{eq:f}) can accommodate a mix of discrete and continuous measurements. The functions $B_i$ are typically viewed as describing the marginal behaviors of the observations in the absence of dependence. Whereas the functions $B_{ij}$ govern  the interactions. These marginal and interaction functions are modulated by the parameter $\omega$. More precisely, if $(Y_1,\ldots,Y_p)\sim f_\omega$, then the parameter $\omega$ encodes the conditional independence structure among the $p$ variables $(Y_1,\ldots,Y_p)$. In particular for $i\neq j$, $\omega_{ij}=0$ means that  $Y_i$ and $Y_j$ are conditionally independent given all other variables. The random variables $(Y_1,\ldots,Y_p)$ can then be represented by an undirect graph where there is an edge between $i$ and $j$ if and only if $\omega_{ij}\neq 0$. This type of models are very useful in practice to tease out direct and indirect connections between sets of random variables.

\begin{example}[Gaussian graphical models]
One recovers the Gaussian graphical model by taking $\Yset_i=\rset$, $B_i(x)=x^2/2$, $B_{ij}(x,y)=xy$, $i<j$. In this case $\Yset=\rset^p$ equipped with the Lebesgue measure, and $\Omega=\M_p^+$.
\end{example}

\begin{example}[Potts models]
For integer $M\geq 2$, one recovers the $M$-states Potts model by taking $\Yset_i=\{1,\ldots,M\}$, $B_i(x)=x$, and $B_{ij}(x,y)=\textbf{1}_{\{x=y\}}$. In this case, $\Yset=\{1,\ldots,M\}^d$ equipped with the counting measure. Since $\Yset$ is a finite set, we have $\Omega=\M_p$. An important special case of the Potts model is a version of the Ising model where $M=2$.
\end{example}

Beyond these examples commonly used in the statistics literature, the family $\{f_\omega,\;\omega \in\Omega\}$ also include the class of auto-models proposed by J. Besag (\cite{besag74}), the mixed discrete-continuous graphical models proposed in  (\cite{cheng:etal:13,yang:etal:14}), as well as few other models used in machine learning, such as Boltzmann machines and Hopefield models (\cite{ruslan:hinton:09}).

Suppose that we observe data $y^{(1)},\cdots,y^{(n)}$ where $y^{(i)}=(y^{(i)}_1,\ldots,y^{(i)}_p)'\in \Yset$ is viewed as a column vector. We set $x\eqdef[y^{(1)},\ldots,y^{(n)}]'\in\rset^{n\times p}$. Given a prior distribution $\Pi$ on $\Omega$, and given the data $x$, the resulting posterior distribution for learning $\omega$ is 
\[\Pi_n(A\vert x) =\frac{\int_A\prod_{i=1}^n f_\omega(y^{(i)}) \Pi(\rmd \omega)}{\int_{\Omega}\prod_{i=1}^n f_\omega(y^{(i)}) \Pi(\rmd \omega)},\;\;A\subseteq \Omega.\]
However, and as discussed in the introduction,  posterior distributions from Markov random fields are typically doubly-intractable\footnote{a terminology introduced by \cite{murray2006mcmc} to mean that the expression of the distribution depends on terms (typically normalizing constants) that cannot be explicitly computed}. There has been some recent advances in MCMC methodology to deal with doubly-intractable distributions (see \cite{lyneetal14} and the references therein). However most of these MCMC algorithms do not scale well to high-dimensional parameter spaces. 

In the frequentist literature, a commonly used approach to circumvent  computational difficulties  with graphical models consists in replacing the likelihood function by a pseudo-likelihood function. For $\omega\in\M_p$, let $\omega_{\cdot i}$ denote the $i$-th column of $\omega$. Note that in the present case, if $(Y_1,\ldots,Y_p)\sim f_\omega$, then for $1\leq j\leq p$, the conditional distribution of $Y_j$ given $\{Y_{k},\; k\neq j\}$  depends on $\omega$ only through the $j$-th column $\omega_{\cdot j}$. We write this conditional distribution as $u\mapsto f^{(j)}_{\omega_{\cdot j}}(u\vert y_{-j})$, where for $y\in\Yset$, $y_{-j}\eqdef (y_1,\ldots,y_{j-1},y_{j+1},\ldots,y_p)$, (with obvious modifications when $j=1,p$).  Let 
\begin{multline*}
\tilde \Omega\eqdef\left\{\omega\in\M_p:\; u\mapsto f^{(j)}_{\omega_{\cdot j}}(u\vert y_{-j}) \mbox{ is a well-defined density on } \Yset_j,\; \right.\\
\left.\mbox{ for all } y\in\Yset,\; \mbox{ and all }1\leq j\leq p\right\}.\end{multline*}
Note that $\Omega\subseteq\tilde\Omega$. The most commonly used pseudo-likelihood method consists in replacing the initial likelihood contribution $f_\omega(y^{(i)})$ by 
\begin{equation}\label{pseudo_f}
\tilde f_\omega(y^{(i)}) = \prod_{j=1}^p f^{(j)}_{\omega_{\cdot j}}(y_j^{(i)}\vert y_{-j}^{(i)}),\;\;\omega\in\tilde \Omega.\end{equation}
This pseudo-likelihood approach, which can be viewed as replacing the model $\{f_\omega,\;\omega\in\Omega\}$ by the pseudo-model $\{\tilde f_\omega,\;\omega\in\tilde\Omega\}$,  typically brings important simplifications. For instance, in the Gaussian case, the parameter space $\tilde \Omega$ corresponds to the space of symmetric matrices with positive diagonals elements, which has a simpler geometry compared to $\M_p^+$. And in the case of discrete graphical models, the conditional models typically have tractable normalizing constants. Despite the fact that $\{\tilde f_{\omega},\;\omega\in\tilde\Omega\}$ is not a proper statistical model, the quasi-likelihood function $\omega\mapsto \sum_{i=1}^n\log \tilde f_{\omega}(y^{(i)})$ still typically leads to consistent estimates of the parameter. The idea goes back to \cite{besag74}, and penalized versions of pseudo-likelihood functions have been employed by several authors to fit high-dimensional graphical models.

A closely related idea is the generalized method of moments (GMM). Given $\omega\in\Omega$, $1\leq j\leq p$, and $y\in\Yset$, define
\[m^{(j)}(\omega_{\cdot j};y_{-j})\eqdef \int_{\Yset_j} u f^{(j)}_{\omega_{\cdot j}}(u\vert y_{-j})\rmd u.\]
Suppose for instance that these conditional moments are well-defined and available in closed form. Then one can derive another pseudo-model $\{\bar f_{\omega},\;\omega\in\bar\Omega\}$, by taking  
\begin{multline*}
\bar \Omega\eqdef\left\{\omega\in\M_p:\; \int_{\Yset_j} |u| f^{(j)}_{\omega_{\cdot j}}(u\vert y_{-j})\rmd u<\infty \mbox{ for all } y\in\Yset,\; \mbox{ and all }1\leq j\leq p\right\}.\end{multline*}
and
\begin{equation}\label{gmm}
\bar f_{\omega}(y^{(i)}) \eqdef  \exp\left(-\sum_{j=1}^p \frac{1}{2\sigma_j^2}\left(y^{(i)}_j-m^{(j)}(\omega_{\cdot j};y^{(i)}_{-j})\right)^2\right),\end{equation}
for positive constants $\sigma_j^2$, $j=1,\ldots,p$. We note that if all the conditional moments of densities in $\{f_\omega,\;\omega\in\Omega\}$ are well defined, then $\Omega\subseteq \bar\Omega$. The function (\ref{gmm}) is the GMM objective function associated with the moment restrictions 
\[\PE_\omega\left[Y_j - m^{(j)}(\omega_{\cdot j};Y_{-j})\right]=0,\;j=1,\ldots,p.\]
In the Gaussian case the two pseudo-likelihood functions (\ref{pseudo_f}) and (\ref{gmm}) coincide. The  moment restriction approach is however more flexible in terms of distributional assumptions.  

Another method for deriving a pseudo-model for this problem is suggested by the neighborhood selection of \cite{meinshausen06}. The idea consists in relaxing the symmetry constraint in $\tilde\Omega$. For $1\leq j\leq p$, we set
\begin{multline*}
\Omega_j\eqdef\left\{\theta\in\rset^p:\; u\mapsto f^{(j)}_{\theta}(u\vert y_{-j}) \mbox{ is a well-defined density on } \Yset_j,\; \right.\\
\left.\mbox{ for all } y\in\Yset,\; \mbox{ and all }1\leq j\leq p\right\}.\end{multline*}
We note that if $\omega\in\Omega$, then $\omega_{\cdot j}\in\Omega_j$. Hence these sets $\Omega_j$ are nonempty, and we define $\check\Omega\eqdef \Omega_1\times \cdots\times \Omega_p$, that we identify as a subset of the space of $p\times p$ real matrices $\rset^{p\times p}$. In particular if $\omega\in\check\Omega$, and  consistently with our notation above, $\omega_{\cdot,j}$ denotes the $j$-column of $\omega$.  We consider the  pseudo-model $\{\check f_\omega,\;\omega\in\check\Omega\}$, where
\begin{equation}\label{quasi:lik:check}
\check f_{\omega}(y)\eqdef \prod_{j=1}^p f^{(j)}_{\omega_{\cdot j}}(y_j\vert y_{-j}),\;\;\omega\in \check\Omega,\;\;y\in\Yset.
\end{equation}
Notice that by definition $\check\Omega$ is a product space, whereas $\tilde\Omega$ is not, due to the symmetry constraint. This implies that $\omega\mapsto\check f_{\omega}(y)$ factorizes along the columns of $\omega$, whereas $\omega\mapsto\tilde f_{\omega}(y)$ typically does not. One can then maximize a penalized version of $\omega\mapsto\sum_{i=1}^n \log \check f_{\omega}(y^{(i)})$, and this corresponds to the neighborhood selection method of (\cite{meinshausen06}, see also \cite{sun:zhang:13}). The optimization can be advantageously solved in parallel for each component if the penalty is separable.

As it turns out, all these pseudo-models can also be used in the Bayesian framework, as shown by the seminal work of \cite{chernozhukov:hong03}. We shall focus on the pseudo-model (\ref{quasi:lik:check}). With a prior distribution $\Pi$ on $\check\Omega$, the quasi-likelihood function $\omega\mapsto\check f_\omega$ leads to a quasi-posterior distribution given by
\begin{equation*}
\check\Pi_{n,p}(A\vert x) = \frac{\int_{A} \prod_{i=1}^n\check f_{\omega}(y^{(i)})\Pi(\rmd \omega)}{\int_{\check\Omega} \prod_{i=1}^n\check f_{\omega}(^{(i)})\Pi(\rmd \omega)},\;\;A\subset\check\Omega,
\end{equation*}
for which MCMC algorithms can be constructed. 
Let us assume that the prior distribution factorizes: $\Pi(\rmd\omega)=\prod_{j=1}^p \Pi_j(\omega_{\cdot j})$. Then  we are led to the  quasi-posterior distribution 
\begin{equation}\label{post:mrf}
\check\Pi_{n,p}(\rmd u_1,\cdots \rmd u_p\vert x) = \prod_{j=1}^p \check\Pi_{n,p,j}(\rmd u_j\vert x),
\end{equation}
where
\[\check\Pi_{n,p,j}(\rmd u\vert x) = \frac{\prod_{i=1}^n f^{(j)}_{u}(y^{(i)}_j\vert y^{(i)}_{-j})\Pi_j(\rmd u)}{\int_{\Omega_j} \prod_{i=1}^n f^{(j)}_{u}(y^{(i)}_j\vert y^{(i)}_{-j})\Pi_j(\rmd u)},\]
is a probability measure on $\Omega_j$. Basically, relaxing the symmetry allows us to factorize the quasi-likelihood function and this leads to a factorized quasi-posterior distribution, as in (\ref{post:mrf}). Each component of this quasi-posterior distribution can then be explored independently. Despite its simplicity, when used in a parallel computing environment, this approach increases by one order of magnitude the size of graphical models that can be estimated. 

\begin{remark}[symmetrization and positive definiteness]
One of the limitation of the quasi-Bayesian approach outlined above is that the distribution $\check\Pi_{n,p}$ does not necessarily produce symmetric and positive definite matrices. However, because of the contraction properties of $\check\Pi_{n,p}$ discussed below, when the true precision matrix is well conditioned, typical realizations of $\check\Pi_{n,p}$ are actually symmetric and positive definite, with high probability. From a practical viewpoint, one can remedy a broken symmetry using various symmetrization rules as suggested for instance in \cite{meinshausen06}. Lack of positive definiteness is more expensive to repair, but can be addressed for instance by projection of the convex cone of semipositive definite matrices via eigendecomposition (\cite{higham:88}), and by addition of a small diagonal matrix.
\end{remark}

\section{Gaussian graphical models}\label{sec:ggm}
We now specialize the discussion to the Gaussian case, where $\textsf{Y}_i=\rset$, $B_i(x)=x^2/2$, and $B_{ij}(x,y)=xy$. Hence in  this case, $\Omega = \M_p^+$, $\tilde \Omega$ corresponds to the set of symmetric matrices with positive diagonal elements, which is an important simplification over $\M_p^+$. Further dropping the symmetry leads to $\check\Omega$, which here is the space of $p\times p$ real matrices with positive diagonal. Assuming that the diagonal elements are known and given, we shall identify $\check\Omega$ with the matrix space $\rset^{(p-1)\times p}$.

If $\vartheta\in\M_p^+$, and $(Y_1,\ldots,Y_p)\sim f_\vartheta$, it is well known that for all $j\in\{1,\ldots,p\}$, the conditional distribution of $Y_j$ given $Y_k=y_k$, for $k\neq j$ is
\begin{equation}\label{cond:dist}
\textbf{N}\left(-\sum_{k\neq j}\frac{\vartheta_{kj}}{\vartheta_{jj}}y_k,\frac{1}{\vartheta_{jj}}\right),
\end{equation}
where $\textbf{N}(\mu,\sigma^2)$ denotes the Gaussian distribution with mean $\mu$ and variance $\sigma^2$.  Given data $x\in\rset^{n\times p}$, given $\sigma_j^2>0$, and given these conditional distributions, the product of the quasi-model (\ref{quasi:lik:check}) across the data set gives (upto normalizing constants that we ignore) the quasi-likelihood
\begin{multline}\label{quasi:lik:GGM}
q(\theta;x) \eqdef \prod_{j=1}^p q_{j}(\theta_{\cdot j};x),\;\\
\mbox{ with }\; q_j(\theta_{\cdot j};x) \eqdef \exp\left(-\frac{1}{2\sigma_j^2}\|x_{\cdot j} - x^{(j)}\theta_{\cdot j}\|_2^2\right),\;\;\theta\in\rset^{(p-1)\times p},\end{multline}
where $x^{(j)}\in\rset^{n\times(p-1)}$ is the matrix obtained from $x$ by removing the $j$-th column, and $x_{\cdot j}$  (resp. $\theta_{\cdot j}$)  denotes the $j$-column of $x$ (resp. $\theta$). Given (\ref{cond:dist}), it is clear that $\sigma_j^2$ is a proxy for $1/\vartheta_{jj}$.  It is also clear that maximizing (\ref{quasi:lik:GGM}) or a penalized version thereof would give an estimate of $-(\vartheta_{kj}/\vartheta_{jj})_{k\neq j}$. This is precisely the idea of the neighborhood selection of \cite{meinshausen06}, or the sparse matrix inversion with scaled lasso of \cite{sun:zhang:13}. These methods can be used to recover the sign (the structure) of $\vartheta$, but also gives an estimate of $\vartheta_{kj}$ if $\sigma_j^2$ is a good estimate of $1/\vartheta_{jj}$, or if $\vartheta_{jj}$ can also be estimated (as in the case of the scaled-lasso).  We combine (\ref{quasi:lik:GGM}) with a prior distribution $\Pi(\rmd \theta)=\prod_{j=1}^p \Pi_j(\rmd\theta_{\cdot j})$ to obtain a  quasi-posterior distribution on $\rset^{(p-1)\times p}$ given by
\begin{equation}\label{quasi:post:GGM1}
\check\Pi_{n,p}(\rmd \theta\vert x)  =  \prod_{j=1}^p \check\Pi_{n,p,j}(\rmd \theta_{\cdot j}\vert x,\sigma_j^2),
\end{equation}
where $\check\Pi_{n,p,j}(\cdot \vert x,\sigma_j^2)$ is the probability measure on $\rset^{p-1}$ given by
\begin{equation*}
\check\Pi_{n,p,j}(\rmd z\vert x,\sigma_j^2)\propto  q_j(z;x)\Pi_j(\rmd z).\end{equation*}
Again the main appeal of $\check\Pi_{n,p}$ is its factorized form, which implies that Monte Carlo samples from $\check\Pi_{n,p}$ can be obtained by sampling in parallel from the $p$ distributions $\check\Pi_{n,p,j}$.

\subsection{Prior distribution}\label{sec:prior}
We address here the choice of the prior distribution $\Pi_j$. Since we are dealing with a linear regression problem, there are many possible ways to set up the prior. We advocate the use of discrete-continuous mixture distributions because these prior distributions have well-understood posterior contraction properties (\cite{castillo:etal:14,atchade:15b}), and are known to produce sparse posterior samples. 

For each $j\in\{1,\ldots,p\}$, we build the prior $\Pi_j$ on $\rset^{(p-1)}$ as in  \cite{atchade:15b}. First, let  $\Delta_{p}\eqdef \{0,1\}^{p-1}$, and let $\{\pi_\delta,\;\delta\in\Delta_{p}\}$ denote a discrete probability distribution on $\Delta_{p}$ (which we assume to be the same for all the components $j$). We take $\Pi_j$ as the distribution of the random variable $u\in\rset^{p-1}$ obtained as follows. 
\begin{multline}\label{prior:ggm}
\delta\sim \{\pi_\delta,\;\delta\in\Delta_p\}.\;\;\mbox{Given }\; \delta,\; (u_1,\ldots,u_{p-1})\;\mbox{ are conditionally independent }\\
\mbox{ and }\;u_k\vert \delta\sim \left\{\begin{array}{ll} \textsf{Dirac}(0) & \mbox{ if } \delta_k=0\\ \textsf{EN}(\rho_{1j},\rho_{2j}) & \mbox{ if } \delta_k=1\end{array}\right.,\end{multline}
where $\textsf{Dirac}(0)$ is the Dirac measure on $\rset$ with mass at $0$, and  $\textsf{EN}(\rho_{1j},\rho_{2j})$ denotes the elastic net distribution on $\rset$ with density proportional to \begin{equation}\label{rep:EN}
z\mapsto \frac{1}{C_j}\exp\left(-\alpha\frac{\rho_{1j}}{\sigma_j^2}|z| -(1-\alpha)\frac{\rho_{2j}}{\sigma_j^2}\frac{z^2}{2}\right), \;\;z\in\rset,\end{equation}
for parameters $\rho_{1j},\rho_{2j}>0$, and where $\alpha\in (0,1]$ is a fixed parameter (in the simulation we use $\alpha=0.9$). The term $C_j$ is the normalizing constant\footnote{can be explicitly computed as $C_j = C_\alpha\left(\frac{\rho_{1j}}{\sigma_j^2},\frac{\rho_{2j}}{\sigma_j^2}\right)$, where, with $\Phi$ denoting the cdf of standard normal distribution, $\textsf{erfcx}(x)= 2e^{x^2}\Phi(-\sqrt{2}x)$ denoting the scaled complementary error function,
\[C_\alpha(\lambda_1,\lambda_2) \eqdef \left\{\begin{array}{ll}\sqrt{\frac{2\pi}{(1-\alpha)\lambda_2}} \textsf{erfcx}\left(\frac{\alpha\lambda_1}{\sqrt{2(1-\alpha)\lambda_2}}\right) & \mbox{ if } \alpha\in[0, 1)\\ \frac{2}{\lambda_1} & \mbox{ if } \alpha=1\end{array}\right..\]
}. We use a fully-Bayesian approach for selecting $\rho_{1j},\rho_{2j}$. More precisely, we assume that $\rho_{1j}$ and $\rho_{2j}$ have independent prior distribution $\rho_{1j}\sim \phi$, $\rho_{2j}\sim \phi$, where $\phi$ is the uniform distribution $\textbf{U}(a_1,a_2)$ for $a_1=10^{-5}$, and where the choice of $a_2$ follows \cite{atchade:15a}~Section 4. 

We focus on situations where, although $p$ is possibly large, the undirected graph defined by the underlying precision matrix is sparse. This prior information is encoded in the prior distribution, by choosing $\pi_\delta$ as follows. We assume that the components of $\delta$ are conditionally independent with distribution $\textbf{Ber}(\textsf{q})$ given $\textsf{q}$, where $\textsf{q}\sim \textbf{Beta}(1,p^u)$, for some $u>1$. Hence according to the prior distribution, the proportion of non-zero component of each column of $\theta$ is $1/p^{u-1}$. We use $u=1.5$.

With the prior distribution given above, and given $\sigma_j^2$, we obtain a fully specified quasi-posterior distribution 
\begin{equation}\label{quasi:post:GGM2}
\prod_{j=1}^p \bar \Pi_{n,p,j}(\delta,\rmd\theta,\rmd\textsf{q},\rmd \rho_{1j},\rmd \rho_{2j}\vert x,\sigma^2_j),\end{equation}
where the $j$-th component $\bar\Pi_{n,p,j}(\cdot \vert x,\sigma_j^2)$ can be written as follows. For $\delta\in\Delta_{p}$, let $\mu_{\delta}$ be the product measure on $\rset^{p-1}$ defined as $\mu_{\delta}(\rmd u)=\prod_{j=1}^{p-1}\nu_{\delta_{j}}(\rmd u_{j})$, where  $\nu_{0}(\rmd z)$ is the Dirac mass at $0$, and $\nu_1(\rmd z)$ is the Lebesgue measure on $\rset$. Then
\begin{multline}\label{post:ggm:full}
\bar\Pi_{n,p,j}(\delta,\rmd\theta,\rmd\textsf{q},\rmd \rho_{1j},\rmd \rho_{2j}\vert x,\sigma^2_j)\propto q_j(\theta;x)\left(\frac{\textsf{q}}{1- \textsf{q}}\right)^{\|\delta\|_1}(1-\textsf{q})^{d+d^{u}-1}\\
\times \left(\frac{1}{C_j}\right)^{\|\delta\|_1}e^{-\alpha\frac{\rho_{1j}}{\sigma_j^2}\|\theta\|_1 -(1-\alpha)\frac{\rho_{2j}}{\sigma_j^2}\frac{\|\theta\|_2^2}{2}}\phi(\rho_{1j})\phi(\rho_{2j})\mu_{\delta}(\rmd\theta)\rmd \textsf{q}\rmd\rho_{1j}\rmd\rho_{2j}.
\end{multline}

Notice that if, instead of the uniform prior distribution $\phi$, we use a point mass prior distribution for $\rho_{1j}$ and $\rho_{2j}$ in (\ref{quasi:post:GGM2}), and integrate out $\textsf{q}$ and $\delta$, we recover exactly (\ref{quasi:post:GGM1}) where the prior $\Pi_j$ is given by (\ref{prior:ggm}) and (\ref{rep:EN}).  The quasi-posterior distribution (\ref{post:ggm:full}) depends on the choice of $\sigma_j^2$. Ideally we would like to set $\sigma_j^2=1/\vartheta_{jj}$. However this quantity is unknown. In practice, we suggest choosing $\sigma_j^2$ by empirical Bayes, following \cite{atchade:15a}~Section 4.3. We explore this approach in the simulations.

\subsection{Approximate MCMC simulation}\label{sec:mcmc}
Given $j\in\{1,\ldots,p\}$, sampling from the distribution $\bar\Pi_{n,p,j}(\cdot\vert x)$ given in (\ref{post:ggm:full}) is a difficult computation task, due to a lack of smoothness in $\theta$, and its trans-dimensional nature\footnote{for two different elements $\delta,\delta'$ of $\Delta_p$, the probability measures $\check\Pi_{n,p,j}(\delta,\cdot\vert x,\sigma_j^2)$ and $\check\Pi_{n,p,j}(\delta',\cdot\vert x,\sigma_j^2)$ are mutually singular}. Here we follow the approach developed by the author in \cite{atchade:15a}, which produces approximate samples from (\ref{post:ggm:full}) by sampling from its Moreau-Yosida approximation $\bar\Pi_{n,p,j}^{(\gamma)}(\delta,\rmd\theta,\rmd\textsf{q},\rmd \rho_{1j},\rmd \rho_{2j}\vert x,\sigma^2_j)$. The parameter $\gamma\in (0,1/4]$ controls the quality of the approximation.  It is shown (\cite{atchade:15a}~Theorem 5) that $\bar\Pi_{n,p,j}^{(\gamma)}(\cdot\vert x,\sigma^2_j)$ converges weakly to $\bar\Pi_{n,p,j}(\cdot\vert x,\sigma^2_j)$ as $\gamma\to 0$. The idea of working with the Moreau-Yosida approximation instead of the distribution itself is attractive because for $\gamma>0$ fixed, all the probability measures $\bar\Pi_{n,p,j}^{(\gamma)}(\delta,\cdot\vert x,\sigma^2_j)$ for $\delta\in\Delta_p$ are smooth and have densities with respect to the (same) Lebesgue measure $\rmd\theta\rmd \textsf{q}\rmd\rho_{1j}\rmd\rho_{2j}$. As a result, one can sample easily from $\bar\Pi_{n,p,j}^{(\gamma)}(\cdot\vert x,\sigma^2_j)$ without any need for trans-dimensional MCMC techniques. 

\subsection{Posterior contraction and rate}
Despite the fact that the number of columns ($p$) and the dimension of each column ($p-1$) are both increasing, we will show next that for $n$ reasonably large and for a well-behaved underlying distribution, typical realizations of the quasi-posterior distribution $\check\Pi_{n,d}$ given in (\ref{quasi:post:GGM1}) put most of its probability mass on small neighborhoods of the true value of the parameter. 

Given a random sample $X\in\rset^{n\times p}$, we shall study the behavior of the random probability measure $\check\Pi_{n,p}(\rmd \theta\vert X)$ on $\rset^{(p-1)\times p}$ as given in (\ref{quasi:post:GGM1}), for large $n,p$. 
We focus on the case where the prior distribution is given by (\ref{prior:ggm})-(\ref{rep:EN}), with $\alpha=1$ (hence $\rho_{2j}$ is irrelevant), and $\rho_{1j}$ fixed. The choice $\alpha=1$ corresponds to the \textsf{Laplace} prior ($\ell^1$ prior), and is made here mainly for simplicity.  We assume below that the rows of $X$ are i.i.d. random variables from a mean-zero Gaussian distribution with precision matrix $\K$.

\begin{assumption}\label{A1}
For some $\K\in\M_p^+$, $X=Z\K^{-1/2}$, where $Z\in\rset^{n\times p}$ is a random matrix with i.i.d. standard normal entries.
\end{assumption}

From the true precision matrix $\K$, we now form the true value of the parameter $\theta_\star\in\rset^{(p-1)\times p}$ towards which $\check\Pi_{n,p}$ should converge. For $j=1,\ldots,p$, $\theta_{\star kj}=-\K_{kj}/\K_{jj}$, for $k=1,\ldots,j-1$, and $\theta_{\star kj}=-\K_{(k+1)j}/\K_{jj}$, for $k=j,\ldots,p-1$. Let $\delta_\star\in\{0,1\}^{(p-1)\times p}$ be the sparsity structure of $\theta_\star$, defined as $\delta_{\star kj}=\textbf{1}_{\{|\theta_{\star kj}|>0\}}$. We set
\begin{equation*}
s_{\star j}\eqdef \sum_{k=1}^{p-1} \textbf{1}_{\{|\theta_{\star kj}|>0\}},\;\;j=1,\ldots,p\;\;\mbox{ and }\;\; s_\star \eqdef \max_{1\leq j\leq p}s_{\star j}.\]
Hence $s_{\star j}$ is the degree of node $j$, and $s_\star$ is the maximum node degree in the undirected graph defined by $\K$.  

The asymptotic behavior of $\check\Pi_{n,p}$ depends crucially on certain restricted and $m$-sparse eigenvalues of the true precision matrix $\K$, that we introduce next. We set
\begin{equation}\label{u_kappa:1}
\underline{\kappa} \eqdef \inf_{\delta\in\{0,1\}^p:\;\|\delta\|_0\leq s_\star}\inf \left\{\frac{u'\K u}{\|u\|_2^2}:\;u\in\rset^{p},\;u\neq 0,\mbox{ s.t. }\sum_{k:\; \delta_{k}=0}|u_k|\leq 7\sum_{k:\; \delta_{k}=1}|u_k|\right\},\end{equation}
and for $1\leq s\leq p$, 
\begin{multline}\label{u_kappa:2}
\utilde{\kappa}(s)  \eqdef \inf \left\{\frac{u'\K u}{\|u\|_2^2}:\;u\in\rset^{p},\;1\leq \|u\|_0\leq s\right\},\\
\;\;\tilde{\kappa}(s)  \eqdef \sup \left\{\frac{u'\K u}{\|u\|_2^2}:\;u\in\rset^{p},\;1\leq \|u\|_0\leq s\right\}.
\end{multline}
In the above equations, we convene that $\inf\emptyset=+\infty$, and $\sup\emptyset=0$.  We shall make the following assumption on the prior distribution $\Pi$ on $\rset^{(p-1)\times p}$. 

\begin{assumption}\label{A2}
$\Pi(\rmd\theta)=\prod_{j=1}^p\Pi_j(\rmd\theta_{\cdot j})$, where for each $j\in\{1,\ldots,p\}$ $\Pi_j$ is of the form described in (\ref{prior:ggm}), with $\alpha_j=1$, and
\begin{equation}\label{rho:ggm}
\rho_{1j} =\rho_j \eqdef \sqrt{\frac{54\tilde\kappa(1)}{\K_{jj}}n\log(p)}.
\end{equation}
Furthermore, the distribution $\{\pi_\delta,\;\delta\in\Delta_{p}\}$ satisfies: $\pi_\delta = g_{\|\delta\|_0} {p-1\choose \|\delta\|_0}^{-1}$, for a discrete distribution $\{g_s,\;0\leq s\leq p-1\}$, for which there exist positive universal constant $c_1,c_2, c_3, c_4$  such that
\begin{equation}\label{cond:g}
\frac{c_1}{p^{c_3}} g_{s-1} \leq g_s\leq \frac{c_2}{p^{c_4}} g_{s-1},\;s = 1,\ldots,p-1.\end{equation}
\end{assumption}

\begin{remark}\label{rem:pi:delta}
This class of prior distributions was pioneered by \cite{castillo:etal:14}. The example of $\{\pi_\delta\}$ presented in Section \ref{sec:prior} satisfies (\ref{cond:g})  with $c_1=1/2$, $c_2=1$, $c_3 = u$ and $c_4=u-1$.
\end{remark}

Our first result shows that if a minimum sample size requirement is met, and if $\K$ is well-behaved, then typical realizations of  $\theta\sim\check\Pi_{n,p}(\cdot\vert X)$ are sparse, with sparsity structure close to the sparsity structure of $\theta_\star$.

\begin{theorem}\label{thm1}
Assume H\ref{A1} and H\ref{A2}, with $\underline{\kappa}>0$. Suppose also that $p$ is large enough so that $p^{c_4}\geq 8c_2\max(1,2c_2)$, with $c_2,c_4$ as in H\ref{A2}. For $1\leq j\leq p$, set 
\[\zeta_j = \frac{4}{c_4} + s_{\star j} + \frac{2}{c_4}\left(\frac{\log(4e p)}{\log(p)} + \frac{6912}{\sigma_j^2\K_{jj}}\frac{\tilde\kappa(1)}{\underline{\kappa}} + \frac{\sigma_j^2\K_{jj}}{24(\log(p))^2}\frac{\tilde\kappa(s_\star)}{\tilde\kappa(1)}\right) s_{\star j}.\]
Then there exist universal finite constants $a_1>0,\;a_2>0$ such that if
\begin{equation}\label{eq:sample:size:thm1}
n\geq a_1 s_\star\left(1+\frac{\tilde\kappa(1)}{\underline{\kappa}}\right)\log(p),\end{equation}
\[\PE\left[\check\Pi_{n,p}\left(\left\{\theta\in\rset^{(p-1)\times p}:\;\|\theta_{\cdot j}\|_0\geq \zeta_j \mbox{ for some } j \right\}\;\vert X\right)\right]\leq 2\left(\frac{1}{e^{a_2 n}} +\frac{2}{p}\right).\]
\end{theorem}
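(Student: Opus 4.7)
The factorization in (\ref{quasi:post:GGM1}) reduces the problem to a column-by-column analysis. By a union bound on the complementary event,
\[
\check\Pi_{n,p}\left(\left\{\exists j:\|\theta_{\cdot j}\|_0\geq\zeta_j\right\}\vert X\right) \leq \sum_{j=1}^p \check\Pi_{n,p,j}\left(\{\|\theta_{\cdot j}\|_0\geq\zeta_j\}\vert X,\sigma_j^2\right),
\]
so it suffices to bound each $j$-th factor. Conditional on $X^{(j)}$, this factor is the spike-and-slab posterior (with Laplace slab, since $\alpha_j=1$) for the sparse linear regression $x_{\cdot j}=X^{(j)}\theta_{\cdot j}+\text{noise}$ with working noise variance $\sigma_j^2$. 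Under H\ref{A1}, the true noise $\epsilon_j\eqdef x_{\cdot j}-X^{(j)}\theta_{\star\cdot j}$ consists of i.i.d.\ $\gauss(0,\K_{jj}^{-1})$ entries independent of $X^{(j)}$, and the true regression vector is $\theta_{\star\cdot j}$ with sparsity $s_{\star j}$.

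This column-wise problem is precisely the setting analyzed in \cite{atchade:15b}, whose posterior-sparsity machinery delivers a bound $\check\Pi_{n,p,j}(\|\theta_{\cdot j}\|_0\geq\zeta_j\vert X,\sigma_j^2) \leq O(p^{-2})$ provided three ingredients hold: (i) $X^{(j)\top}X^{(j)}/n$ satisfies compatibility / sparse-eigenvalue conditions with constants tracking $\underline\kappa$, $\tilde\kappa(1)$, and $\tilde\kappa(s_\star)$; (ii) the regularization $\rho_j$ from (\ref{rho:ggm}) dominates the noise statistic $\|X^{(j)\top}\epsilon_j\|_\infty$; (iii) the distribution $\{\pi_\delta\}$ satisfies (\ref{cond:g}). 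Item (iii) is directly H\ref{A2}. Item (ii) follows from a Gaussian tail bound: each coordinate of $X^{(j)\top}\epsilon_j$ is conditionally $\gauss(0,\K_{jj}^{-1}\|X^{(j)}_{\cdot k}\|_2^2)$ given $X^{(j)}$, and once the column norms are controlled on the design event described below, the calibration $\rho_j=\sqrt{54\tilde\kappa(1)\K_{jj}^{-1}n\log p}$ in (\ref{rho:ggm}) is chosen so that $\rho_j$ dominates $\|X^{(j)\top}\epsilon_j\|_\infty$ with failure probability $O(p^{-3})$ per column after a union bound on $k$.

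Item (i) is the main technical obstacle. Under H\ref{A1}, the rows of $X^{(j)}$ are i.i.d.\ centered Gaussians with covariance $(\K^{-1})_{-j,-j}$; the Schur-complement identity $((\K^{-1})_{-j,-j})^{-1}=\K_{-j,-j}-\K_{jj}^{-1}\K_{-j,j}\K_{j,-j}$ links its sparse eigenvalues, uniformly in $j$, to those controlled by $\underline\kappa$ and $\tilde\kappa(\cdot)$ on $\K$. A standard restricted-eigenvalue transfer argument, based on $\chi^2$-type Gaussian concentration together with a discretization over sparsity patterns of size $\leq 2s_\star$, then shows that, when $n\geq a_1 s_\star(1+\tilde\kappa(1)/\underline\kappa)\log p$ as in (\ref{eq:sample:size:thm1}), there is a single good event $\mathcal{G}_n$ with $\PP(\mathcal{G}_n)\geq 1-e^{-a_2 n}$ on which every $X^{(j)\top}X^{(j)}/n$ simultaneously ($j=1,\ldots,p$) satisfies the needed compatibility and sparse-eigenvalue conditions with constants of the same order as the population quantities $\underline\kappa$, $\tilde\kappa(1)$, $\tilde\kappa(s_\star)$. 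This single concentration step is also what absorbs the column-norm control used in item (ii).

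Combining the per-column bound with $\PP(\mathcal{G}_n^c)\leq e^{-a_2 n}$ and summing over $j$ yields
\[
\PE\left[\check\Pi_{n,p}\left(\exists j:\|\theta_{\cdot j}\|_0\geq\zeta_j\vert X\right)\right] \leq \PP(\mathcal{G}_n^c) + \sum_{j=1}^p O(p^{-2}) \leq e^{-a_2 n} + O(p^{-1}),
\]
matching the claimed bound $2(e^{-a_2 n}+2/p)$ up to universal constants. The truly delicate book-keeping is tracking numerical constants through the RE-transfer and noise-tail steps uniformly in $j$, so that the specific coefficients entering $\zeta_j$ (in particular the factor $6912$) emerge from the cone-ratio $7$ hidden in the definition of $\underline\kappa$ and from the calibration $\rho_j^2\propto 54\,\tilde\kappa(1)\K_{jj}^{-1}n\log p$.
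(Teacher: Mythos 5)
Your proposal is correct and follows essentially the same route as the paper: a union bound over columns, reduction of each $\check\Pi_{n,p,j}$ (conditionally on $X^{(j)}$) to the posterior-sparsity bound for sparse linear regression from \cite{atchade:15b}, a single good design event obtained by Gaussian restricted-eigenvalue concentration (the paper's Lemma \ref{lem:lem0}), and the calibration of $\rho_j$ in (\ref{rho:ggm}) so that it dominates the sup-norm of the score at $\theta_{\star\cdot j}$. The only cosmetic difference is that the paper does not need your Schur-complement step: it controls the sparse and restricted eigenvalues of the full Gram matrix $X'X/n$ once and uses that deleting a column can only improve these bounds, so $X\in\mathcal{G}_{n,p}$ implies $X^{(j)}\in\mathcal{G}_{n,p-1}$ for every $j$.
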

\begin{proof}
See Section \ref{sec:proof:thm1}.
\end{proof}

\begin{remark}
In the ideal case where $\sigma^2_j=1/\K_{jj}$, and for $n,p$ large, we see that if $\theta\sim\check\Pi_{n,p}(\cdot\vert X)$, then with high probability $\|\theta_{\cdot j}\|_0<\zeta_j$ for all $j\in\{1,\ldots,p\}$, and
\[\zeta_j \approx \frac{4}{c_4} + s_{\star j} + \frac{2}{c_4}\left(1+  6912\frac{\tilde\kappa(1)}{\underline{\kappa}}\right)s_{\star j}.\]
Hence if the restricted condition number $\tilde\kappa(1)/\underline{\kappa}$ remains small, then for large values of $p$, typical realizations of $\check\Pi_{n,p}$ are sparse. The large constant $6912$ appearing in the theorem is most likely an artifact of the techniques used in the proof, and can probably be improved.
\end{remark}

\medskip
For $\theta\in\rset^{(p-1)\times p}$, we set
\[\nnorm{\theta} \eqdef \max_{1\leq j\leq p} \|\theta_{\cdot j}\|_2.\]
\begin{theorem}\label{thm2}
Assume H\ref{A1}, H\ref{A2}, with $\underline{\kappa}>0$. Let $\zeta_j$ be as in theorem \ref{thm1}. Set $\bar s\eqdef \max_{1\leq j\leq p} \bar s_j$, where $\bar s_j \eqdef s_{\star j} +\zeta_j$ if $s_{\star j}>0$, and $\bar s_j\eqdef 1$ otherwise. Set
\[\epsilon \eqdef 12\sqrt{6}\frac{\sqrt{\tilde\kappa(1)}}{\utilde\kappa(\bar s)}\sqrt{\frac{\bar s\log(p)}{n}},\]
and $M_0\eqdef  \max\left(96,(4+c_4(2+c_3)/2)\max_j\sigma_j^2\vartheta_{jj}\right)$. Then there exists universal finite constant $a_1>0$, $a_2>0$ such that if $p\geq \max(24e,2/c_1)$, $p^{c_4}\geq 8c_2\max(1,2c_2)$, and 
\begin{equation}\label{eq:sample:size:thm2}
n\geq a_1 s_\star\left(\frac{\bar\kappa(1)}{\underline{\kappa}}\right)\log(p),\;\mbox{ and }\; n\geq a_1 \bar s \log(p),\end{equation}
then
\[\PE\left[\check\Pi_{n,p}\left(\left\{\theta\in\rset^{(p-1)\times p}:\;\nnorm{\theta-\theta_\star}>M_0\epsilon\right\}\vert X\right)\right]\leq 3\left(\frac{1}{e^{a_2n}} + \frac{4}{p}\right).\]
\end{theorem}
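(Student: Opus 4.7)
The plan is to exploit the product structure of the quasi-posterior together with the sparsity bound already secured by \autoref{thm1}, and then invoke a per-column Bayesian linear-regression contraction argument of the type developed in \cite{atchade:15b}. Since $\nnorm{\theta-\theta_\star}=\max_{1\leq j\leq p}\|\theta_{\cdot j}-\theta_{\star\cdot j}\|_2$ and $\check\Pi_{n,p}$ factorizes as in (\ref{quasi:post:GGM1}), a union bound gives
\[\check\Pi_{n,p}\!\left(\nnorm{\theta-\theta_\star}>M_0\epsilon\,\vert\,X\right)\;\leq\;\sum_{j=1}^p \check\Pi_{n,p,j}\!\left(\|u-\theta_{\star\cdot j}\|_2>M_0\epsilon\,\vert\, X,\sigma_j^2\right),\]
so, after taking expectations, it suffices to show that each summand is $O(1/p^2)$ up to an exponentially small term; summing then yields the target $O(1/p)$.

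Next I would fold in \autoref{thm1}: its failure event, of expected posterior mass $2(e^{-a_2 n}+2/p)$, already accounts for one of the two terms in the conclusion, and on its complement $\|\theta_{\cdot j}\|_0\leq \zeta_j$, so that $u-\theta_{\star\cdot j}$ is $\bar s_j$-sparse, hence uniformly $\bar s$-sparse. This is the crucial structural reduction that lets me deploy the $m$-sparse eigenvalue $\utilde\kappa(\bar s)$ in the denominator of $\epsilon$. Under H\ref{A1}, $X_{\cdot j}=X^{(j)}\theta_{\star\cdot j}+\varepsilon_j$ with $\varepsilon_j\sim \textbf{N}(0,\K_{jj}^{-1}I_n)$ and $\varepsilon_j\perp X^{(j)}$, which turns each factor into a standard sparse Bayesian linear regression problem.

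For the per-column bound, I would bound the posterior ratio
\[\check\Pi_{n,p,j}(\A_j\,\vert\, X)=\frac{\int_{\A_j} q_j(u;X)\Pi_j(\rmd u)}{\int q_j(u;X)\Pi_j(\rmd u)},\qquad \A_j=\{\|u-\theta_{\star\cdot j}\|_2>M_0\epsilon,\;\|u\|_0\leq \zeta_j\},\]
by (i) a lower bound on the denominator obtained by restricting the integral to a small $\ell_2$-ball around $\theta_{\star\cdot j}$ with matching sparsity pattern $\delta_{\star\cdot j}$, using the prior's mass assignment in (\ref{prior:ggm}) and the constraint (\ref{cond:g}) on $\{g_s\}$, and (ii) an upper bound on the numerator based on the identity
\[\|X_{\cdot j}-X^{(j)}u\|_2^2-\|\varepsilon_j\|_2^2 = \|X^{(j)}(u-\theta_{\star\cdot j})\|_2^2-2\varepsilon_j^{\prime}X^{(j)}(u-\theta_{\star\cdot j}),\]
where the quadratic form is bounded below by $n\utilde\kappa(\bar s)\|u-\theta_{\star\cdot j}\|_2^2/2$ on a high-probability event controlling the sparse eigenvalues of $X^{(j){\prime}}X^{(j)}/n$, and the linear form is bounded above by a sub-Gaussian deviation inequality that, after maximizing over $\bar s$-sparse directions, scales like $\sqrt{n\tilde\kappa(1)\bar s\log p}\cdot \|u-\theta_{\star\cdot j}\|_2$. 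Matching these bounds and solving for the threshold produces exactly the rate $\epsilon$, with the constant $M_0$ chosen to dominate the prior penalty term $\rho_j$ coming from (\ref{rho:ggm}).

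The main obstacle, and where most of the work sits, is producing the high-probability control of the restricted/sparse eigenvalues of each empirical Gram matrix $X^{(j){\prime}}X^{(j)}/n$ in terms of $\utilde\kappa(\bar s)$ and $\tilde\kappa(1)$ of $\K$, uniformly in $j$. This requires Gaussian concentration (of Raskutti--Wainwright--Yu type) and a discretization over $\bar s$-sparse cones; the sample-size assumption (\ref{eq:sample:size:thm2}), namely $n\gtrsim \bar s\log p$, is exactly what guarantees that the resulting failure probability is exponentially small, hence summable over $j$ within the $O(1/p)$ budget. Assembling these ingredients, combining with the sparsity event from \autoref{thm1}, and collecting constants yields the stated bound $3(e^{-a_2 n}+4/p)$.
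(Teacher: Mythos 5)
Your proposal is correct and follows essentially the same route as the paper: decompose via the sparsity event secured by Theorem \ref{thm1}, a single high-probability event controlling the restricted/$m$-sparse eigenvalues of $X$ (Raskutti--Wainwright--Yu / Rudelson--Zhou), and a per-column union bound that reduces each factor $\check\Pi_{n,p,j}$ to a sparse Bayesian linear regression $X_{\cdot j}=X^{(j)}\beta+\eta$ conditionally on $X^{(j)}$. The only cosmetic difference is that the paper invokes Theorem 13 of \cite{atchade:15b} as a black box for the per-column contraction bound, whereas you sketch its internal prior-mass/denominator and numerator-deviation argument directly.
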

\begin{proof}
See Section \ref{sec:proof:thm2}.
\end{proof}

\medskip
\begin{remark}
Theorem \ref{thm2} shows that the contraction rate of $\check\Pi_{n,p}$ towards $\theta_\star$ in the $\nnorm{\cdot}$ norm is $O(\epsilon)$. This corresponds to the worst rate among the rates of contraction of the $p$ linear regression problems performed during  the neighborhood selection procedure. This rate is similar to the rate of convergence of the (frequentist) neighborhood selection method of \cite{meinshausen06}, which is of order 
\begin{equation}\label{rate:ns}
 \sqrt{\frac{s_\star\log(p)}{n}},\end{equation}
(see the discussion in Section 3.4 of \cite{ravikumaretal11}). The main difference between (\ref{rate:ns}) and the rate in Theorem \ref{thm2} is in the dependence on the maximum degree $s_\star$. In the Bayesian case, $s_\star$ is replaced by a worst-case estimate from Theorem \ref{thm1}, namely the largest value that the maximum degree of $\theta\sim \check\Pi_{n,p}(\cdot\vert X)$ can take (with a significant probability).

An interesting difference pointed out in \cite{ravikumaretal11} (Section 3.4), between the neighborhood selection approach and graphical lasso approaches, is that neighborhood selection methods requires a sample size $n$ that scales linearly in $s_\star$, whereas graphical lasso methods require a sample size sample that scales quadratically in $s_\star$. We recover the same dependence on $n$ in Equations (\ref{eq:sample:size:thm1}) and (\ref{eq:sample:size:thm2}) of Theorems \ref{thm1} and \ref{thm2}, where the sample size scales linearly in $\bar s$.

\end{remark}

\section{Numerical experiments}\label{sec:example}
We evaluate the behavior of the quasi-posterior distribution (\ref{quasi:post:GGM1}) on three simulated datasets. As benchmark, we also report the results obtained using  the elastic net estimator
\[\hat\vartheta_{\textsf{glasso}} = \textsf{Argmin }_{\theta \in\M_p^+} \left[-\log\det\theta + \textsf{Tr}(\theta S) + \lambda\sum_{i,j} \left(\alpha | \theta_{ij} | + \frac{(1-\alpha)}{2} \theta_{ij}^2 \right)\right],\]
where $S=(1/n)x'x$, $\alpha=0.9$, and $\lambda>0$ is a regularization parameter. We choose $\lambda$ by minimizing $-\log\det\left(\hat\theta(\lambda)\right)+ \textsf{Tr}(\hat\theta(\lambda) S) +\log(n) \sum_{i<j} \textbf{1}_{\{|\hat\theta(\lambda)_{ij}|>0\}}$, over a finite set of values of $\lambda$.  We hasten to add that our goal is not to compare the quasi-Bayesian method to graphical \texttt{lasso}, since the former utilizes vastly more computing power that the latter. The outputs are also very different, since \texttt{Glasso} gives only a point estimate whereas the Bayesian approach produces a full posterior distribution. Rather, we report these numbers as references, to help the reader better understand the behavior of the proposed methodology.
 
\subsection{Simulation set ups}
We generate a data matrix $x\in\rset^{n\times p}$ with i.i.d. rows from $\textbf{N}(0,\vartheta^{-1})$, $\vartheta\in\M_p^+$. Throughout we set the sample size to $n=250$, and we consider three settings. 
\begin{description}
\item [(a)] $\vartheta$ is generated as in Setting (c) below, but using $p=100$ nodes.
\item [(b)] In this case $p=500$, and we take $\vartheta$ from the \textsf{R}-package \texttt{space}   based on the work \cite{peng:etal:09}\footnote{The precision matrix used here corresponds to the example ``Hub network" in Section 3 of \cite{peng:etal:09}. A non-sparse version of $\vartheta$ is attached to the \texttt{space} package}. These authors have designed a precision matrix $\vartheta$ that is modular with $5$ modules of $100$ nodes each. Inside each module, there are $3$ hubs with degree around $15$, and $97$ other nodes with degree at most $4$. The total number of edges is $587$. The resulting  partial correlations fall within $(-0.67,-0.10]\cup [0.10,0.67)$. As explained in \cite{peng:etal:09}, this type of networks are useful models for biological networks.
\item [(c)] In this case $p=1,000$, and we build $\vartheta$ as follows. First we generate a symmetric sparse matrix $B$ such that the number of off-diagonal non-zeros entries is roughly $2p$. We magnified the signal by adding $3$ to all the non-zeros entries of $B$ (subtracting $3$ for negative non-zero entries). Then we set $\vartheta=B + (\epsilon-\lambda_{\min}(B))I_p$, where $\lambda_{\min}(B)$ is the smallest eigenvalue of $B$, with $\epsilon=1$. In this example, values of the partial correlations are typically in the range $(-0.46,-0.18]\cup [0.18,0.48)$.
\end{description}

\medskip

To evaluate the effect of the hyper-parameter $\sigma_j^2$, we report two sets of results. One where $\sigma^2_j = 1/\vartheta_{jj}$, and another set of results where $\vartheta_{jj}$ is assumed unknown and we select $\sigma_j^2$ from the data, using the cross-validation estimator described in \cite{reid:etal:13} (see also \cite{atchade:15a}~Section 4.3).

\medskip
In order to mitigate the uncertainty in some of the results reported below, we repeat all the MCMC simulations  20 times. Hence, to summarize, for each setting (a), (b), and (c), we generate one precision matrix $\vartheta$. Given $\vartheta$, we generate 20 datasets, and for each dataset, we run two MCMC samplers (one where the $\sigma^2_j$'s are taken as the $1/\vartheta_{jj}'s$, and one where they are estimated from the data).
\subsection{Estimation details}\label{sec:estimation}
As explained in Section \ref{sec:mcmc}, we first approximate the target quasi-posterior 
\[\prod_{j=1}^p \bar\Pi_{n,p,j}(\delta,\rmd\theta,\rmd\textsf{q},\rmd \rho_{1j},\rmd \rho_{2j}\vert x,\sigma^2_j)\]
by 
\begin{equation}\label{dist:ex}
\prod_{j=1}^ p \bar\Pi^{(\gamma)}_{n,p,j}(\delta,\rmd\theta,\rmd \textsf{q},\rmd \phi_{1j},\rmd\phi_{2j}\vert x,\sigma^2_j),\end{equation}
where $\bar\Pi_{n,p,j}^{(\gamma)}(\cdot\vert x,\sigma^2_j)$ is the Moreau-Yosida approximation of $\bar\Pi_{n,p,j}(\cdot\vert x,\sigma^2_j)$ given in (\ref{post:ggm:full}).  In all the simulations below, we use $\gamma=0.2$. We then sample from (\ref{dist:ex}) by parallel computing, each distribution $\bar\Pi^{(\gamma)}_{n,p,j}(\cdot\vert x,\sigma^2_j)$ at the time, and using the MCMC sampler developed in \cite{atchade:15a}. We use a high-performance computer with $100$ nodes.

\medskip

To simulate from $\bar\Pi^{(\gamma)}_{n,p,j}(\cdot\vert x,\sigma^2_j)$ for a given $j$, we run the MCMC sampler for $50,000$ iterations and discard the first $10,000$ iterations as burn-in. We use Geweke's diagnostic test on the remaining samples to test for convergence using the negative pseudo-log-likelihood function $\theta\mapsto \frac{1}{2\sigma_j^2}\|x_{\cdot j}-x^{(j)}\theta\|_2^2$. All the samplers passed the test. This suggests that $50,000$ is a reasonably large number of iterations for these examples.

From the MCMC output, we estimate the structure $\delta\in\{0,1\}^{p\times p}$ as follows. We set the diagonal of $\delta$ to one, and for each off-diagonal entry $(i,j)$ of $\delta$, we estimate $\delta_{ij}$ as equal to $1$ if the sample average estimate of $\delta_{ij}$ (from the $j$-th chain) and the sample average estimate of $\delta_{ji}$  (from the  $i$-th chain) are both larger than $0.5$. Otherwise $\delta_{ij}=0$. Obviously, other symmetrization rules could be adopted.

Given the estimate $\hat\delta$ say, of $\delta$, we estimate $\vartheta\in\rset^{p\times p}$ as follows. We set the diagonal of $\vartheta$ to $(1/\sigma^2_j)$. For $i\neq j$, if $\hat\delta_{ij}=0$, we set $\vartheta_{ij}=\vartheta_{ji}=0$. Otherwise we estimate $\vartheta_{ij} = \vartheta_{ji}$ as $0.5(-1/\sigma_j^2)\bar\vartheta_{ij}+0.5(-1/\sigma_i^2)\bar\vartheta_{ji}$, where $\bar\vartheta_{ij}$ (resp. $\bar\vartheta_{ji}$) is the Monte Carlo sample average estimate of $\vartheta_{ij}$ from the $j$-th chain (resp. $i$-th chain). For all the off-diagonal components $(i, j)$ such that $\hat\delta_{ij}=1$, we also produce a $95\%$ posterior interval by taking the union of the $95\%$ posterior intervals from the $i$-th and $j$-th chains. When $\hat\delta_{ij}=0$, we set the confidence interval to $\{0\}$.

\subsection{Results}
We look  at the performance of the method by computing the relative Frobenius norm, the sensitivity and the precision of the estimated matrix (as obtained above). These quantities are defined respectively as
\begin{multline}\label{sen:prec}
\mathcal{E} = \frac{\|\hat\vartheta-\vartheta\|_{\textsf{F}}}{\|\vartheta\|_{\textsf{F}}},\;\;\textsf{SEN} = \frac{\sum_{i<j} \textbf{1}_{\{|\vartheta_{ij}|>0\}} \textbf{1}_{\{\textsf{sign}(\hat\vartheta_{ij})=\textsf{sign}(\vartheta_{ij})\}}}{\sum_{i<j} \textbf{1}_{\{|\vartheta_{ij}|>0\}}}\,;\; \\
\mbox{ and }\;\;\textsf{PREC}=\frac{\sum_{i<j} \textbf{1}_{\{|\hat\vartheta_{ij}|>0\}} \textbf{1}_{\{\textsf{sign}(\hat\vartheta_{ij})=\textsf{sign}(\vartheta_{ij})\}}}{\sum_{i<j} \textbf{1}_{\{|\hat\vartheta_{ij}|>0\}}}.\end{multline}
We average these statistics over the 20 simulations replications.  We compute also the same quantities for the elastic net $\hat\vartheta_{\textsf{glasso}}$. These results are reported in Table \ref{table:1}-\ref{table:3}. These results suggest that the quasi-Bayesian procedure has good contraction properties  in the Frobenius norm (and hence in the $L_{\infty,2}$ norm). The results also suggest that the quasi-Bayesian procedure is not very sensitive (it has a high false negative rate), but has excellent precision (it has a very low false positive rate), even with $p=1,000$. The same conclusion seems to hold across all three network settings considered in the simulations. 

Another interesting point to notice from these results is that there seems to be little difference between the results where $\vartheta_{jj}$ is assumed known and the results where $\vartheta_{jj}$   is estimated from the data.


\medskip

In a typical use of the method in the applications, one would run the MCMC sampler only once, and compute the posterior estimate, and confidence intervals, for instance  as in Section \ref{sec:estimation}. We show one such output. In Setting (a), where $p=100$, we plot on Figure \ref{fig:1} all the $95\%$ confidence intervals for all the off-diagonal elements $\vartheta_{ij}$ of $\vartheta$, obtained from one MCMC run. We also add a dot to the confidence interval line to represent the true value of $\vartheta_{ij}$. The results seem consistent with the results in Table \ref{table:1}.


\medskip

\begin{table}[h]
\begin{center}
\small
\scalebox{.9}{\begin{tabular}{cccc}
\hline
 & $\vartheta^2_{jj}$ known & Empirical Bayes &  \texttt{Glasso} \\
\hline
Relative Error ($\mathcal{E}$ in $\%$) & 19.2 & 21.6 & 63.1\\
Sensitivity ($\textsf{SEN}$ in $\%$) & 68.4 & 69.0 & 40.5\\
Precision ($\textsf{PREC}$ in $\%$) &  100.0  & 100.0  & 74.9\\
\hline
\end{tabular}}
\caption{\small{Table showing the relative error, sensitivity and precision (as defined in (\ref{sen:prec})) for Setting (a), with $p=100$ nodes. Based on 20 simulation replications. Each MCMC run is $5\times 10^4$ iterations.
}}\label{table:1}
\end{center}
\end{table}

\begin{table}[h]
\begin{center}
\small
\scalebox{.9}{\begin{tabular}{cccc}
\hline
 & $\vartheta^2_{jj}$ known & Empirical Bayes &  \texttt{Glasso} \\
\hline
Relative Error ($\mathcal{E}$ in $\%$) & 23.1 & 26.2 & 45.2\\
Sensitivity ($\textsf{SEN}$ in $\%$) & 44.6 & 45.4 & 87.9\\
Precision ($\textsf{PREC}$ in $\%$) &  100  & 99.9  & 56.1\\
\hline
\end{tabular}}
\caption{\small{Table showing the relative error, sensitivity and precision (as defined in (\ref{sen:prec})) for Setting (b), with $p=500$ nodes. Based on 20 simulation replications. Each MCMC run is $5\times 10^4$ iterations.
}}\label{table:2}
\end{center}
\end{table}

\begin{table}[h]
\begin{center}
\small
\scalebox{.9}{\begin{tabular}{cccc}
\hline
 & $\vartheta^2_{jj}$ known & Empirical Bayes &  \texttt{Glasso} \\
\hline
Relative Error ($\mathcal{E}$ in $\%$) & 30.8 & 35.2 & 66.9\\
Sensitivity ($\textsf{SEN}$ in $\%$)   & 16.3 & 16.4 & 6.6 \\
Precision ($\textsf{PREC}$ in $\%$)    & 99.9 & 99.8 & 94.7\\
\hline
\end{tabular}}
\caption{\small{Table showing the relative error, sensitivity and precision (as defined in (\ref{sen:prec})) for Setting (c), with $p=1,000$ nodes. Based on 20 simulation replications. Each MCMC run is $5\times 10^4$ iterations.
}}\label{table:3}
\end{center}
\end{table}

\medskip

\begin{figure}[h!]
\centering
\scalebox{0.5}[.6]{\begin{tabular}{c}
\includegraphics{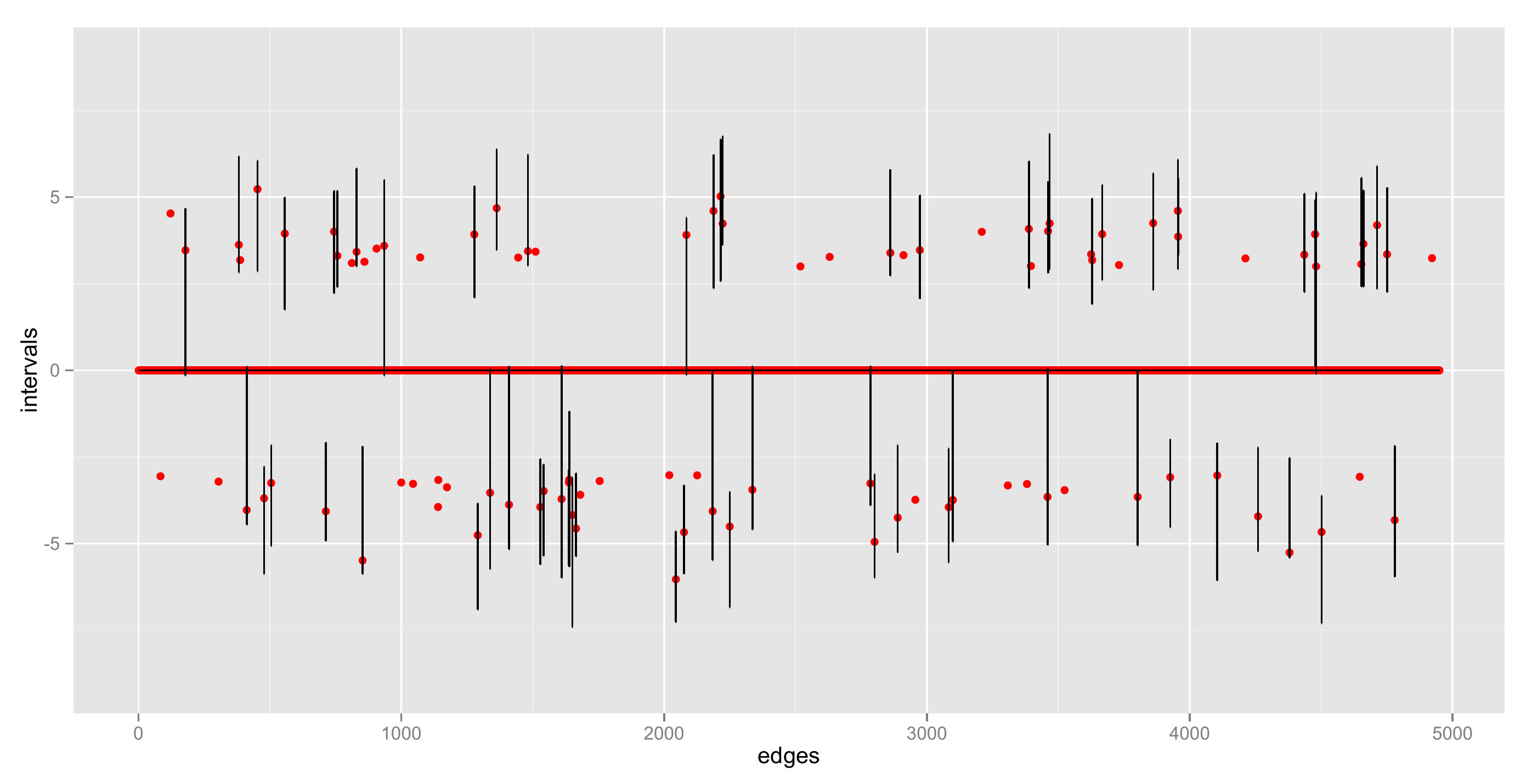} \\
\includegraphics{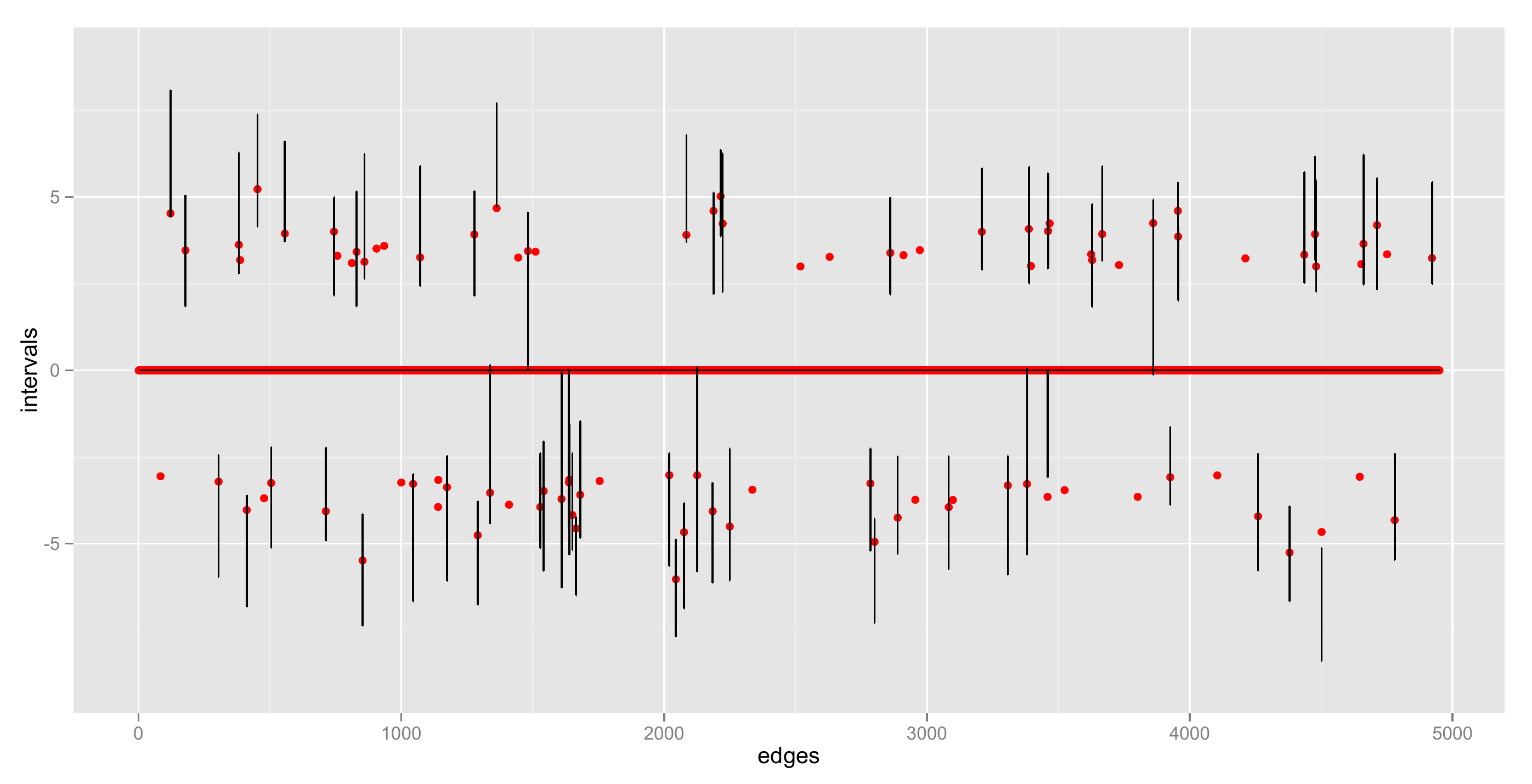} \\
\end{tabular}}
\caption{{\small{Figure showing the confidence interval bars (obtained from one MCMC run), for the non-diagonal entries of $\vartheta$ in Setting (a). The dots represent the true values. Top graph is the case where the $\vartheta_{jj}$'s are assumed known.} }}
\label{fig:1}
\end{figure}

\section{Some concluding remarks}\label{sec:conclusion}
We have developed in this work a quasi-Bayesian methodology for inferring high-dimensional Gaussian graphical models by neighborhood selection. We have shown by examples that, using a high-performance computer systems with multiple cores, the method can fit Gaussian graphical models at a scale unmatched by existing Bayesian methodologies. The general discussion in Section \ref{sec:qbgm} also shows that the method can be easily extended to handle other classes of graphical models. We have studied the asymptotic behavior of the method in the Gaussian case, and showed that for sparse and well-behaved problems, the quasi-posterior distribution concentrates around the true value even in setting where $p$ exceeds $n$. One important direction for future work is the extension of the methodology to estimate the scale parameters $\vartheta_{jj}$ jointly, as part of the Bayesian modeling. This extension raises several difficulties, in terms of the computations (the approximation scheme of \cite{atchade:15a} cannot readily handle such cases), but also in terms of the Bayesian asymptotics.

\section{Proof of Theorem \ref{thm1} and Theorem \ref{thm2}}\label{sec:proof}
Similar results have been derived recently for the linear regression model by \cite{castillo:etal:14}, and by the author in \cite{atchade:15b}. Therefore,  a natural strategy to proof Theorem \ref{thm1} and Theorem \ref{thm2} is to reduce the problem to a corresponding problem in a linear regression model. In the details, we will rely on the behavior of some restricted and $m$-sparse eigenvalues concepts that we introduce first. For $z\in\rset^{n\times q}$, for some $q\geq 1$, and for $s\geq 1$,  we define
\[
\underline{\kappa}(s,z) \eqdef \inf_{\delta\in\{0,1\}^q:\;\|\delta\|_0\leq s}\inf \left\{\frac{\theta'(z'z)\theta}{n\|\theta\|_2^2}:\;\theta\in\rset^q,\;\theta\neq 0,\;\sum_{k:\; \delta_{k}=0}|\theta_k|\leq 7 \sum_{k:\; \delta_{k}=1}|\theta|\right\},\]
and
\begin{multline*}\utilde{\kappa}(s,z)  \eqdef \inf \left\{\frac{\theta'(z'z)\theta}{n\|\theta\|_2^2}:\;\theta\in\rset^q,\;1\leq \|\theta\|_0\leq s\right\},\;\\
\;\tilde{\kappa}(s,z)  \eqdef \sup \left\{\frac{\theta'(z'z)\theta}{n\|\theta\|_2^2}:\;\theta\in\rset^q,\;1\leq \|\theta\|_0\leq s\right\}.
\end{multline*}
In the above definition, we convene that $\inf\emptyset=+\infty$, and $\sup\emptyset=0$. We are interested in the behavior of $\underline{\kappa}(s_\star,X)$, $\utilde{\kappa}(s,X)$ and $\tilde\kappa(s,X)$, when $X$ is the random matrix obtained from assumption H\ref{A1}. We will use the following result taken from \cite{raskutti:etal:10}~Theorem 1, and \cite{rudelson:zhou:13}~Theorem 3.2, which relates the behavior of $\underline{\kappa}(s_\star,X)$, $\utilde{\kappa}(s,X)$ and $\tilde\kappa(s,X)$ to the corresponding term $\underline{\kappa}$, $\utilde{\kappa}(s)$ and $\tilde\kappa(s)$ of the true precision matrix $\K$ introduced in (\ref{u_kappa:1})-(\ref{u_kappa:2}).

\begin{lemma}\label{lem:lem0}
Assume H\ref{A1}. Then there exists finite universal constant $a_1>0$, $a_2>0$ such that for the following hold.
\begin{enumerate}
\item If $\underline{\kappa}>0$, then for all $n\geq a_1\frac{\tilde\kappa(1)}{\underline{\kappa}}s_\star\log(p)$
\[\PP\left[64\underline{\kappa}(s_\star,X)<\underline{\kappa}\right]\leq e^{-a_2 n}.\]
\item Let $1\leq s\leq p$ be such that $\utilde{\kappa}(s)>0$, then for all $n\geq a_1 s\log(p)$,
\[\PP\left[4\utilde{\kappa}(s,X)<\utilde{\kappa}(s)\;\mbox{ or }\; 4\tilde\kappa(s,X)>9\tilde\kappa(s)\right]\leq  e^{-a_2 n}.\]
\end{enumerate}
\end{lemma}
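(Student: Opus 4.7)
The plan is to deduce both statements as essentially direct consequences of the two cited results: Theorem 1 of Raskutti--Wainwright--Yu (RWY), which supplies the restricted eigenvalue bound, and Theorem 3.2 of Rudelson--Zhou (RZ), which supplies the two-sided sparse eigenvalue bounds. Under H\ref{A1}, the matrix $X = Z\K^{-1/2}$ is precisely the kind of anisotropic Gaussian random design to which those results apply, so what remains is essentially bookkeeping and a union bound.

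For part (i), I would fix a pattern $\delta\in\{0,1\}^p$ with $\|\delta\|_0\leq s_\star$ and apply RWY to the cone $\mathcal{C}(\delta)=\{u:\sum_{k:\delta_k=0}|u_k|\leq 7\sum_{k:\delta_k=1}|u_k|\}$. The theorem yields, on an event of probability at least $1-\exp(-c_0 n)$, a pointwise inequality of the form $\|X\theta\|_2^2/n\geq c_1\underline{\kappa}\|\theta\|_2^2 - c_2\,\tilde{\kappa}(1)\,(\log(p)/n)\,\|\theta\|_1^2$ uniformly over $\theta\in\mathcal{C}(\delta)$. On this cone the elementary bound $\|\theta\|_1\leq 8\sqrt{s_\star}\|\theta\|_2$ converts the residual term into an $O(\tilde{\kappa}(1)\,s_\star\log(p)/n)\,\|\theta\|_2^2$ correction. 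Forcing this correction to be a small fraction of $\underline{\kappa}$ gives the stated sample-size requirement $n\geq a_1(\tilde{\kappa}(1)/\underline{\kappa})\,s_\star\log(p)$, whereupon $64\,\underline{\kappa}(s_\star,X)\geq\underline{\kappa}$ on the event. A union bound over the $\binom{p}{s_\star}\leq p^{s_\star}$ possible supports $\delta$ contributes a factor $p^{s_\star}$ which is absorbed into the exponential at the cost of enlarging $a_1$, thanks to the same sample-size condition.

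For part (ii), the strategy is identical in structure but uses RZ Theorem 3.2, which gives simultaneous control of the smallest and largest eigenvalues of $X_S'X_S/n$ for a fixed coordinate subset $S$ with $|S|\leq s$. For each such $S$, RZ delivers a two-sided comparison with the corresponding population object on an event of probability at least $1-\exp(-c_0 n)$ when $n\gtrsim s$; the specific numerical constants $1/4$ and $9/4$ are recovered by selecting an absolute-constant slack in the deviation bound. A union bound over the $\binom{p}{s}\leq p^s$ subsets $S$ is again absorbed into the exponent, yielding the requirement $n\geq a_1 s\log(p)$. The only step I expect to require real care is not probabilistic but a matter of tracking universal constants through RWY and RZ in order to land on the precise numerical factors $64$, $4$, and $9/4$ and to relate the diagonal-scale quantity $\max_j(\K^{-1})_{jj}$ naturally appearing in the cited bounds to the quantity $\tilde{\kappa}(1)$ used in the sample-size condition; there is no new probabilistic ingredient beyond the two cited theorems.
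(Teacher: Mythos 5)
Your proposal is correct and follows exactly the route the paper itself takes: the paper offers no proof of Lemma \ref{lem:lem0} beyond citing Theorem 1 of \cite{raskutti:etal:10} for the restricted-eigenvalue part and Theorem 3.2 of \cite{rudelson:zhou:13} for the two-sided sparse-eigenvalue part, and your write-up simply fills in the standard cone bound $\|\theta\|_1\leq 8\sqrt{s_\star}\|\theta\|_2$, the union bound over supports, and the constant tracking that those citations leave implicit. (Minor remark: the Raskutti--Wainwright--Yu bound already holds uniformly over all of $\rset^p$, so the union bound over sparsity patterns in part (i) is superfluous, though harmless.)
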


\subsection{Proof of Theorem \ref{thm1}}\label{sec:proof:thm1}
We have
\[\check\Pi_{n,p}(\rmd \theta\vert X)  =  \prod_{j=1}^p \check\Pi_{n,p,j}(\rmd \theta_{\cdot j}\vert X),\]
where for $j\in\{1,\ldots,p\}$, and using that $\alpha=1$ in (\ref{rep:EN}), $\check\Pi_{n,p,j}(\rmd \theta_{\cdot j}\vert X)$ is given by
\begin{equation}\label{qp:proof}
\check\Pi_{n,p,j}(\rmd u\vert X) \propto q_{j}(u;X) \sum_{\delta\in\Delta_p} \pi_\delta \left(\frac{\rho_j}{2\sigma_j^2}\right)^{\|\delta\|_1} e^{-\frac{\rho_j}{\sigma_j^2}\|u\|_1}\mu_{\delta}(\rmd u),\end{equation}
and
\[\log q_{j}(u;X) = -\frac{1}{2\sigma_j^2}\|X_{\cdot j} - X^{(j)}u\|_2^2.\]

For $q\geq 1$, we define  
\[\mathcal{G}_{n,q}\eqdef \left\{ z\in\rset^{n\times q}:\; \tilde\kappa(s_\star,z)\leq \frac{9}{4}\tilde\kappa(s_\star),\;\;\tilde\kappa(1,z)\leq \frac{9}{4}\tilde\kappa(1),\;\mbox{ and }\; \underline{\kappa}(s_\star,z)\geq \frac{\underline{\kappa}}{64}\right\}.\]
For any $k_j\geq 0$, we start by noting that 
\begin{multline*}
\PE\left[\check\Pi_{n,p}\left(\left\{\theta\in\rset^{(p-1)\times p}:\;\|\theta_{\cdot j}\|_0\geq k_j,\;\mbox{ for some }j\right\}\vert X\right)\right]\\
\leq  \PP(X\notin \mathcal{G}_{n,p})  + \sum_{j=1}^p \PE\left[\textbf{1}_{\mathcal{G}_{n,p}}(X) \check\Pi_{n,p,j}\left(\A_j\vert X\right)\right].\end{multline*}
where $\A_j\eqdef \{u\in\rset^{p-1}:\;\|u\|_0 \geq k_j\}$. We notice that if $X\in \mathcal{G}_{n,p}$, then $X^{(j)}\in \mathcal{G}_{n,p-1}$ for any $1\leq j\leq p$. We recall that the notation $X^{(j)}$ denotes the matrix obtained by removing the $j$ column of $X$. Hence
\begin{multline*}
\PE\left[\textbf{1}_{\mathcal{G}_{n,p}}(X) \check\Pi_{n,p,j}\left(\A_j\vert X\right)\right] \\
\leq  \PE\left[\textbf{1}_{\mathcal{G}_{n,p-1}}(X^{(j)}) \check\Pi_{n,p,j}\left(\A_j\vert X\right)\right] = \PE\left[\textbf{1}_{\mathcal{G}_{n,p-1}}(X^{(j)})\PE\left(\check\Pi_{n,p,j}\left(\A_j\vert X\right) \vert X^{(j)}\right)\right].\end{multline*}
We conclude that
\begin{multline}\label{eq1:proof:thm1}
\PE\left[\check\Pi_{n,p}\left(\left\{\theta\in\rset^{(p-1)\times p}:\;\|\theta_{\cdot j}\|_0\geq k_j,\;\mbox{ for some }j\right\}\vert X\right)\right]\\
\leq  \PP(X\notin \mathcal{G}_{n,p})  + \sum_{j=1}^p \PE\left[\textbf{1}_{\mathcal{G}_{n,p-1}}(X^{(j)}) T_j\right],\end{multline}
where
\[T_j = \PE\left(\check\Pi_{n,p,j}\left(\A_j\vert X\right) \vert X^{(j)}\right).\]
The main idea of the proof is to notice that $T_j$ is an expected quasi-posterior probability in the linear regression model $X_{\cdot j} = X^{(j)}\beta +\eta$, where $\eta\sim \textbf{N}(0,(1/\vartheta_{jj})I_n)$. Therefore, by a similar argument and similar calculations as in the proof of Theorem 13 of \cite{atchade:15b}, we have
\begin{multline}\label{eq2:proof:thm1}
T_j \leq 2p\exp\left(-\frac{\vartheta_{jj}\rho_j^2}{8\max_{k\neq j}\|X_{\cdot k}\|_2^2}\right) \\
+ 
2 (4^{s_{\star j}})\left(1+\frac{\sigma_j^2L_j}{\rho_j^2}\right)^{s_{\star j}} e^{\frac{2\rho_j^2 s_{\star j}}{\tau_j\sigma_j^2}} {p-1\choose s_{\star j}}\left(\frac{4c_2}{p^{c_4}}\right)^{k_j-s_{\star j}},
\end{multline}
where $L_j = n\tilde\kappa(s_\star,X^{(j)})$, and $\tau_j = n\underline{\kappa}(s_\star,X^{(j)})$. Since $\max_{k\neq j}\|X_{\cdot k}\|_2^2 = n\tilde\kappa(1,X^{(j)})$, for $X^{(j)}\in\mathcal{G}_{n,p-1}$, it is easy to see that the first term on the right-hand side of (\ref{eq2:proof:thm1}) is bounded by 
\[2p\exp\left(-\frac{\vartheta_{jj}\rho_j^2}{18n\tilde\kappa(1)}\right) = \frac{2}{p^2},\]
where the equality follows from the choice of $\rho_j$. Using the fact that for $X^{(j)}\in\mathcal{G}_{n,p-1}$, we have $L_j \leq (9/4)n\tilde\kappa(s_\star)$, $\tau_j \geq (1/64)n\underline{\kappa}$, it is easy to show that the second term on the right-hand side of (\ref{eq2:proof:thm1}) is bounded by 
\[2\exp\left[s_{\star j}\log(p)\left(\frac{6912}{\sigma_j^2\vartheta_{jj}}\frac{\tilde\kappa(s_\star)}{\underline{\kappa}} +\frac{\sigma_j^2\vartheta_{jj}}{24(\log(p))^2}\frac{\tilde\kappa(s_{\star j})}{\tilde\kappa(1)} +\frac{\log(4ep)}{\log(p)}\right) -\frac{c_4}{2}(k_j-s_{\star j})\log(p)\right].\]
With $k_j=\zeta_j$ as given in the statement of the theorem,  this latter expression is bounded by $2/(p^2)$. This conclude the proof.

\medskip
\subsection{Proof of Theorem \ref{thm2}}\label{sec:proof:thm2}
We use the same approach as above. We define $\bar s_j = s_{\star j} + \zeta_j$ ($\bar s_j=1$ if $s_{\star j}=0$), and  $\bar s = \max_j \bar s_j$, and we set
\[\mathcal{G}_{n,q}\eqdef \left\{ z\in\rset^{n\times q}:\; \tilde\kappa(s_\star,z)\leq \frac{9}{4}\tilde\kappa(s_\star),\mbox{ and }\; \utilde{\kappa}(\bar s,z)\geq \frac{1}{4}\utilde{\kappa}(\bar s)\right\}.\]
We also define $\mathcal{U}\eqdef \{\theta\in\rset^{(p-1)\times p}:\; \|\theta_{\cdot j}-\theta_{\star \cdot j}\|_2> \epsilon_j,\;\mbox{ for some } j\}$, 
$\bar{\mathcal{U}} \eqdef \mathcal{U} \cap \{\theta\in\rset^{(p-1)\times p}:\; \|\theta_{\cdot j}-\theta_{\star \cdot j}\|_0\leq s_{\star j} + \zeta_j \;\mbox{ for all } j\}$, and 
\begin{multline}\label{eq1:proof:thm2}
\check\Pi_{n,p}(\mathcal{U}\vert X) \leq \check\Pi_{n,p}\left(\{\theta\in\rset^{(p-1)\times p}:\; \|\theta_{\cdot j}-\theta_{\star \cdot j}\|_0 > s_{\star j} + \zeta_j\;\mbox{ for some } j\}\vert X\right) \\
+ \textbf{1}_{\mathcal{G}_{n,p}^c}(X) + \textbf{1}_{\mathcal{G}_{n,p}}(X) \check\Pi_{n,p}\left(\bar{\mathcal{U}}\vert X\right).\end{multline}
If for some $j$, $\|\theta_{\cdot j}-\theta_{\star \cdot j}\|_0 > s_{\star j} + \zeta_j$, then we necessarily have $\|\theta_{\cdot j}\|_0>\zeta_j$. Therefore, by Theorem \ref{thm1}, we have:
\begin{equation}\label{proof:thm2:bound1}
\PE\left[\check\Pi_{n,p}\left(\{\theta\in\rset^{(p-1)\times p}:\; \|\theta_{\cdot j}-\theta_{\star \cdot j}\|_0 > s_{\star j} + \zeta_j\;\mbox{ for some } j\}\vert X\right)\right]\leq \frac{2}{e^{a_2n}} + \frac{4}{p}.\end{equation}
By Lemma \ref{lem:lem0}, for $n\geq a_1\bar s\log(p)$,
\begin{equation}\label{proof:thm2:bound2}
\PE\left[\textbf{1}_{\mathcal{G}_{n,p}^c}(X)\right]= \PP\left[X\notin\mathcal{G}_{n,p}\right]  \leq \frac{1}{e^{a_2n}}.\end{equation}
It remains to control the last term on the right-hand side of (\ref{eq1:proof:thm2}). To do so, we note that if $X\in \mathcal{G}_{n,p}$, then $X^{(j)}\in\mathcal{G}_{n,p-1}$ for all $1\leq j\leq p$. Hence
\begin{eqnarray}
\PE\left[\textbf{1}_{\mathcal{G}_{n,p}}(X) \check\Pi_{n,p}\left(\bar{\mathcal{U}}\vert X\right)\right] & \leq & \sum_{j=1}^p \PE\left[\textbf{1}_{\mathcal{G}_{n,p-1}}(X^{(j)})\check\Pi_{n,p,j}(\A_j\vert X)\right]\nonumber\\
&\leq &\sum_{j=1}^p \PE\left[\textbf{1}_{\mathcal{G}_{n,p-1}}(X^{(j)})\PE\left(\check\Pi_{n,p,j}(\A_j\vert X)\vert X^{(j)}\right)\right],
\end{eqnarray}
where $\A_j\eqdef \{u\in\rset^{p-1}:\; \|u-\theta_{\star \cdot j}\|_2>\epsilon_j,\;\mbox{ and }\; \|u-\theta_{\star \cdot j}\|_0\leq \bar s_{j}\}$. 
As in the proof of Theorem \ref{thm1}, we note that under the conditional distribution of $X_{\cdot j}$ given $X^{(j)}$, the term $\check\Pi_{n,p,j}(\A_j\vert X)$ can be viewed as the posterior distribution in the linear regression model $X_{\cdot j} = X^{(j)}\beta +\eta$, where $\eta\sim \textbf{N}(0,(1/\vartheta_{jj})I_n)$. Therefore, by proceeding as in the proof of Theorem 13 of \cite{atchade:15b}, and for any constant $M_0\geq 96$, we have
\begin{multline}\label{eq2:proof:thm2}
\PE\left(\check\Pi_{n,p,j}(\A_j\vert X)\vert X^{(j)}\right) \leq 2p\exp\left(-\frac{\vartheta_{jj}\rho_j^2}{8\max_{k\neq j}\|X_{\cdot k}\|_2^2}\right)\\
 + e^{\bar s_j\log(24pe)} \frac{e^{-\frac{M_0^2\tau_j\bar\epsilon_j^2}{32}} }{1-e^{-\frac{M_0^2\tau_j \bar\epsilon_j^2}{32}}}
+2{p\choose s_{\star j}}\left(\frac{p^{c_3}}{c_1}\right)^{s_{\star j}} \left(1+\frac{\sigma_j^2 L_j}{\rho_j^2}\right)^{s_{\star j}}\frac{e^{-\frac{M_0^2\tau_j\bar\epsilon_j^2}{64}} }{1-e^{-\frac{M_0^2\tau_j\bar\epsilon_j^2}{64}}},
\end{multline}
where $\bar\epsilon_j = \frac{\rho_j \bar s_j^{1/2}}{\tau_j}$, $\tau_j = n\utilde{\kappa}(\bar s_j,X^{(j)})$, and $L_j = n\tilde{\kappa}(s_{\star j},X^{(j)})$. As seen in the proof of Theorem \ref{thm1}, the first term on the right-hand side of (\ref{eq2:proof:thm2}) is upper bounded by $2/p^2$.

We have 
\[\frac{M_0^2\tau_j\bar\epsilon_j^2}{32} \geq \left(\frac{54 M_0^2}{32}\frac{1}{\sigma_j^2\vartheta_{jj}}\right) \bar s_j\log(p).\]
 Hence for $p\geq 24e$, and $\frac{54 M_0^2}{32}\frac{1}{\sigma_j^2\vartheta_{jj}}\geq 4$, the second term on the right-hand side of (\ref{eq2:proof:thm2}) is also upper bounded by $2/p^2$. For $\frac{54 M_0^2}{32}\frac{1}{\sigma_j^2\vartheta_{jj}}\geq 4$, the third term is upper bounded by
\begin{multline*}
4\exp\left[s_{\star j}\log(p)\left(2+c_3 +\frac{\sigma_j^2\vartheta_{jj}}{24(\log(p)^2)} \frac{\tilde\kappa(s_{\star j})}{\tilde\kappa(1)}\right) - \frac{54 M_0^2}{64}\frac{1}{\sigma_j^2\vartheta_{jj}}\bar s_j\log(p)\right]\leq \frac{2}{p^2},
\end{multline*}
 by choosing $\frac{54 M_0^2}{64}\frac{1}{\sigma_j^2\vartheta_{jj}}\geq 2 + \frac{c_4}{2}(2+c_3)$. This conclude the proof.

\vspace{1cm}
\medskip
\medskip

\noindent {\large{\textbf{Acknowledgements}}}

The author would like to thank Shuheng Zhou for very helpful conversations. This work is partially supported by the NSF, grants DMS 1228164 and DMS 1513040.

\bibliographystyle{ims}
\bibliography{biblio_graph,biblio_mcmc,biblio_optim}

\end{document}